\documentclass[11pt,a4paper]{article}
\usepackage{caption}
\usepackage{epsf,epsfig,amsfonts,amsgen,amsmath,amstext,amsbsy,amsopn,amsthm
}
\usepackage{ebezier,eepic}
\usepackage{color}
\usepackage{multirow}
\usepackage{mathrsfs}
\usepackage{tikz}
\usepackage{enumerate}
\setlength{\textwidth}{150mm} \setlength{\oddsidemargin}{7mm}
\setlength{\evensidemargin}{7mm} \setlength{\topmargin}{-5mm}
\setlength{\textheight}{245mm} \topmargin -18mm

\newtheorem{dfn}{Definition}
\newtheorem{thm}[dfn]{Theorem}

\newtheorem{prop}[dfn]{Proposition}

\newtheorem{corollary}[dfn]{Corollary}

\begin{document}

\title{Decomposing $C_4$-free graphs under degree constraints}

\date{}

\author{
Jie Ma\thanks{School of Mathematical Sciences, University of Science and Technology of China,
Hefei, Anhui 230026, China. Email: jiema@ustc.edu.cn. Partially supported by NSFC grants 11501539 and 11622110.}
~~~~~
Tianchi Yang\thanks{School of Mathematical Sciences, University of Science and Technology of China,
Hefei, Anhui 230026, China. Email: ytc@mail.ustc.edu.cn.}
}

\maketitle

\begin{abstract}
A celebrated theorem of Stiebitz \cite{Sti} asserts that any graph with minimum degree at least $s+t+1$ can be partitioned into two parts
which induce two subgraphs with minimum degree at least $s$ and $t$, respectively.
This resolved a conjecture of Thomassen.
In this paper, we prove that for $s,t\geq 2$, if a graph $G$ contains no cycle of length four and has minimum degree at least $s+t-1$, then
$G$ can be partitioned into two parts which induce two subgraphs with minimum degree at least $s$ and $t$, respectively.
This improves the result of Diwan in \cite{Diwan}, where he proved the same statement for graphs of girth at least five.
Our proof also works for the case of variable functions, in which the bounds are sharp as showing by some polarity graphs.
As a corollary, it follows that any graph containing no cycle of length four with minimum degree at least $k+1$ contains $k$ vertex-disjoint cycles.
\end{abstract}

\medskip
\noindent {\bf Keywords:} feasible partition, degree constraints, Stiebitz's Theorem, $C_4$-free graphs

\section{Introduction}

All graphs $G=(V,E)$ considered here are finite and simple.
The degree of a vertex $v$ in $G$ is expressed as $d_G(v)$, and for a subset $A\subseteq V$,
we denote by $d_A(v)$ the number of vertices in $A$ that are adjacent to $v$ in $G$.
By a {\it partition} $(A,B)$ of $V$, we mean that $A,B$ are two disjoint non-empty sets with $A\cup B=V$.

Many problems raised in graph theory concern graph decompositions under certain constraints (for instance, graph coloring problems).
Perhaps one of the earliest results regrading graph decompositions under degree constraints is due to Lov\'asz \cite{Lov} in 1966,
who proved that any graph with maximum degree at most $s+t+1$ has a partition $(A,B)$ such that
the subgraphs induced on $A$ and $B$ have maximum degree at most $s$ and $t$, respectively.
This was generalized by Borodin and Kostochka \cite{BK77} to the case of variable functions
(the meaning of which will be clear from the contents later).

The counterpart of Lov\'asz' theorem, i.e., graph decompositions under minimum degree constraints, also has received extensive research.
Let $f(s,t)$ be the least function such that any graph with minimum degree at least $f(s,t)$ has a partition $(A,B)$ so that
the subgraphs induced on $A$ and $B$ have minimum degree at least $s$ and $t$, respectively.
The existence of $f(s,t)$ was proved by Thomassen \cite{T83} in 1983,
and then this function was subsequently improved by H\"aggkvist, Alon, and Hajnal \cite{Haj} (see the discussion in \cite{T88}).
It was also conjectured by Thomassen \cite{T83,T88} that $f(s,t)=s+t+1$, and complete graphs show that this bound would be tight.
Later, Stiebitz \cite{Sti} resolved this conjecture completely.
In fact he proved the following stronger result, in the setting of variable functions.
Let $\bf{N}$ denote the set of non-negative integers.

\begin{thm}\label{Thm:Sti} (Stiebitz \cite{Sti})
Let $G$ be a graph and $a,b: V(G)\rightarrow {\bf{N}}$ be two functions.
If $d_G(x)\geq a(x)+b(x)+1$ for every vertex $x\in V(G)$,
then there is a partition $(A,B)$ of $V(G)$ satisfying that
$d_A(x)\geq a(x)$ for every $x\in A$, and
$d_B(x)\geq b(x)$ for every $x\in B$.
\end{thm}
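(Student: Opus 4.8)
The plan is to avoid induction and instead argue by a weighting (extremal) method. For an ordered pair $(A,B)$ with $A\cup B=V(G)$ and $A\cap B=\emptyset$, temporarily allowing one of $A,B$ to be empty, let $e(A,B)$ denote the number of edges of $G$ joining $A$ to $B$, and set
$$\Phi(A,B)=e(A,B)+\sum_{x\in A}a(x)+\sum_{x\in B}b(x).$$
I would fix a pair $(A,B)$ minimizing $\Phi$ and try to read off the conclusion from its optimality. The guiding idea is that a vertex failing its local degree demand can be moved to the other side very cheaply, because it then kills many crossing edges while the hypothesis $d_G\ge a+b+1$ keeps the two linear terms from compensating.

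The core computation is the following relocation step. Suppose $(A,B)$ minimizes $\Phi$ and some $x\in A$ is \emph{deficient}, meaning $d_A(x)\le a(x)-1$. Transferring $x$ from $A$ to $B$ alters the crossing edges by $d_A(x)-d_B(x)$, lowers $\sum_{A}a$ by $a(x)$, and raises $\sum_{B}b$ by $b(x)$, so the change in weight is
$$\Delta\Phi=\bigl(d_A(x)-d_B(x)\bigr)-a(x)+b(x).$$
Using $d_B(x)=d_G(x)-d_A(x)$ together with $d_G(x)\ge a(x)+b(x)+1$ and $d_A(x)\le a(x)-1$, I get $d_A(x)-d_B(x)=2d_A(x)-d_G(x)\le a(x)-b(x)-3$, and hence $\Delta\Phi\le-3<0$, contradicting minimality. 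The decisive feature is that this estimate uses no lower bound on $|A|$, so at any minimizer no vertex is deficient in its own part; the extra $+1$ in the hypothesis is precisely what turns the inequality strict (indeed it buys a surplus of $3$). Consequently a $\Phi$-minimizer already satisfies $d_A(x)\ge a(x)$ for all $x\in A$ and $d_B(x)\ge b(x)$ for all $x\in B$.

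What remains, and what I expect to be the real difficulty, is that $\Phi$ may be minimized by a degenerate pair with one empty part, so the output need not be a genuine partition; for example in $K_4$ with $a\equiv b\equiv 1$ the weight is strictly smallest when all four vertices lie on one side. To finish one must show that the empty part can always be avoided, and this is where the degree surplus has to be invested a second time. If the empty part is $B$ and some vertex $x$ has $b(x)=0$, then moving $x$ into $B$ works immediately: $x$ meets its null demand, while every remaining $y\in A$ loses at most one neighbour and still has $d_A(y)\ge d_G(y)-1\ge a(y)$. The stubborn case is when every vertex has $b(x)\ge 1$ (and, symmetrically, $a(x)\ge 1$), where one must carve off a nonempty part whose induced subgraph already meets the demands, which is essentially a smaller instance of the problem. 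My plan would be to combine the minimization with a secondary extremal choice, such as maximizing $\min(|A|,|B|)$ among all $\Phi$-minimizers, and then re-run the relocation argument to show that a deficient singleton cannot survive; verifying that this boundary analysis always terminates in a proper partition is the delicate heart of the argument.
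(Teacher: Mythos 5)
Your relocation computation is correct, and it is worth noting that your potential is not new to this paper: since $e(A,B)=e(G)-|E(G[A])|-|E(G[B])|$ and $\sum_{x\in A}b(x)+\sum_{x\in B}a(x)=\sum_{x\in V}\bigl(a(x)+b(x)\bigr)-\sum_{x\in A}a(x)-\sum_{x\in B}b(x)$, minimizing your $\Phi$ is, up to additive constants, exactly maximizing the weight $w(A,B)$ of \eqref{equ:w(A,B)}, and your $\Delta\Phi$ formula is the single-vertex move of Claim 3 with the surplus $+1$ in place of the paper's $-1$. But observe what this step actually delivers: at a global $\Phi$-minimizer no vertex is deficient --- a condition that the degenerate pair $(V,\emptyset)$ satisfies vacuously (every $x$ has $d_G(x)\geq a(x)$), and as you yourself note, the degenerate pairs can be the \emph{strict} minimizers. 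So the minimization argument, which you present as the core, proves nothing by itself; the entire content of Stiebitz's theorem sits in the step you defer.

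That gap is genuine and your proposed patch demonstrably fails. In your own $K_4$ example with $a\equiv b\equiv 1$, the weights are $\Phi=4$ for the two degenerate pairs, $\Phi=7$ for $|A|\in\{1,3\}$, and $\Phi=8$ for $|A|=2$, so the degenerate pairs are the \emph{only} minimizers; maximizing $\min(|A|,|B|)$ among $\Phi$-minimizers therefore still selects an empty part, and the relocation argument cannot restart, since moving any $x$ into the empty side changes $\Phi$ by $d_G(x)-a(x)+b(x)\geq 2b(x)+1>0$. Restricting the minimization to proper partitions does not rescue this either: a deficient vertex in a singleton part cannot be relocated without emptying its side, which forces you into swap arguments of the kind the paper needs Claims 4--10 for, even with the extra $C_4$-free hypothesis. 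The clean way to organize the proof is the opposite reduction: first establish a \emph{feasible pair}, i.e.\ two disjoint nonempty sets with $A$ being $a$-good and $B$ being $b$-good; then Proposition \ref{Prop:2} (take a feasible pair with $A\cup B$ maximal) upgrades it to a partition in three lines, more cheaply than your $\Phi$-argument does. Constructing the feasible pair is where the $+1$ surplus must actually be spent, via minimal $a$-good sets, the degeneracy characterization of Proposition \ref{Prop:1}, and exchange arguments on precisely your potential; this is the substance of Stiebitz's proof in \cite{Sti}, which the present paper cites rather than reproduces. As written, your proposal establishes only the easy half and leaves the theorem's real difficulty open.
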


Kaneko \cite{Kan} proved that any triangle-free graph with minimum degree at least $s+t$ can already force a partition $(A,B)$ as above.
The minimum degree condition was further sharpen by Diwan \cite{Diwan}, when cycles of length four are also forbidden.
To be precise, Diwan proved that, assuming $s,t\geq 2$, any graph of girth at least five with minimum degree at least $s+t-1$ has a partition $(A,B)$ such that
the subgraphs induced on $A$ and $B$ have minimum degree at least $s$ and $t$, respectively.
For related problems on graph decompositions with degree constraints or other variances,
we refer readers to \cite{Ban,BL16,BTV,GK04,LFW,SS17,Sti17}.

In this paper the following result is proved.

\begin{thm}\label{Thm: main}
Let $G$ be a graph containing no cycles of length four and $a,b: V(G)\rightarrow {\bf{N}}_{\geq 2}$ be two functions,
where ${\bf{N}}_{\geq 2}$ denotes the set of integers at least two.
If $$d_G(x)\geq a(x)+b(x)-1$$ for every vertex $x\in V(G)$,
then there is a partition $(A,B)$ of $V(G)$ satisfying that
$d_A(x)\geq a(x)$ for every $x\in A$, and $d_B(x)\geq b(x)$ for every $x\in B$.
\end{thm}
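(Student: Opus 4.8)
The plan is to argue by contradiction through a variational (extremal) argument: I would choose the partition so as to minimize a deficiency potential, and then show that any minimizer with positive deficiency can be improved. For a partition $(A,B)$ call a vertex $x$ \emph{deficient} if either $x\in A$ and $d_A(x)<a(x)$, or $x\in B$ and $d_B(x)<b(x)$, and write $y^+=\max(y,0)$. Let
$$D(A,B)=\sum_{x\in A}(a(x)-d_A(x))^+ + \sum_{x\in B}(b(x)-d_B(x))^+$$
be the total deficiency. Since there are finitely many partitions, some $(A,B)$ minimizes $D$, and it suffices to prove $D(A,B)=0$; so I assume $D(A,B)>0$ and look for a partition of strictly smaller total deficiency, contradicting minimality.

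The heart of the matter is the following observation, which is exactly where the hypothesis $d_G(x)\ge a(x)+b(x)-1$ is used with no room to spare. If $x\in A$ is deficient then $d_A(x)\le a(x)-1$, hence $d_B(x)=d_G(x)-d_A(x)\ge b(x)$; thus relocating $x$ to $B$ reduces its own contribution to $D$ to zero. Computing the effect of moving a single deficient vertex $v\in A$ to $B$, the only terms that can increase are those of the neighbours $u\in N(v)\cap A$ with $d_A(u)\le a(u)$ (each by at most one), while deficient neighbours of $v$ in $B$ strictly benefit, so
$$D(A\setminus v,\, B\cup v)-D(A,B)\le -(a(v)-d_A(v)) + |\{u\in N(v)\cap A: d_A(u)\le a(u)\}|.$$
Minimality of $D$ therefore forces every deficient $v\in A$ to have at least $a(v)-d_A(v)\ge 1$ neighbours in $A$ that are themselves \emph{tight or deficient}, i.e.\ satisfy $d_A(u)\le a(u)$; the symmetric statement holds on the $B$ side. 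This pins down the local structure of a $D$-minimal partition around its deficient vertices.

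To finish, a single-vertex move is \emph{not} enough: with only the bound $a(x)+b(x)-1$ there is no slack, so moving $v$ can create a fresh (formerly tight) deficient neighbour and merely shift deficiency around. The plan is instead to relocate a carefully grown set $S\subseteq A$ to $B$ at once — an ``infection'' propagated along tight edges from a deficient vertex — chosen so that every vertex of $S$ is satisfied in $B\cup S$ while the total deficiency strictly drops. This is precisely where $C_4$-freeness is indispensable: since any two vertices have at most one common neighbour, the codegrees inside the critical set of tight/deficient vertices are bounded by one, which both limits how many common neighbours two deficient vertices can compete for and keeps the subgraph induced on $S$ sparse, so that relocating $S$ damages few vertices of $A\setminus S$. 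Heuristically, this codegree bound is what supplies the ``missing $2$'' separating our threshold $a+b-1$ from Stiebitz's $a+b+1$.

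The main obstacle I anticipate is exactly the control of this cascade: one must choose $S$ so that the process terminates, does not leak deficiency back to the $B$ side, and yields a net decrease in $D$, all while juggling the two-sided nature of the potential. I expect this to require a simultaneous accounting of the tight vertices on both sides together with a double-counting of common neighbours that invokes $C_4$-freeness, and it is here that the hypothesis $a,b\ge 2$ should enter, to rule out degenerate low-degree configurations where the codegree slack is not available. Finally, the tightness of the threshold — and hence the fact that the argument can afford no slack — can be sanity-checked against the Erd\H{o}s--R\'enyi polarity graphs, which are $C_4$-free, nearly regular, and furnish the sharp examples alluded to in the abstract.
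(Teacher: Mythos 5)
Your extremal setup is sound as far as it goes: the single-move computation is correct, and minimality of $D$ does force every deficient $v\in A$ to have at least $a(v)-d_A(v)\ge 1$ neighbours $u\in A$ with $d_A(u)\le a(u)$. But the proposal stops exactly where the theorem's difficulty begins. The entire content of the result is the step you defer --- constructing a relocation set $S$ whose move strictly decreases $D$ --- and you acknowledge you do not know how to control it (``the main obstacle I anticipate is exactly the control of this cascade''). This is not a routine completion. At the threshold $d_G=a+b-1$ there is exactly one unit of slack, and a $D$-minimizer typically admits no strictly improving move at all; any successful argument must instead chain through moves that keep the potential \emph{tied} and extract a contradiction from the structure of the tie. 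That is what the paper does, with machinery your sketch does not contain: it first uses the feasible-pair reduction (Proposition \ref{Prop:2}) and the degeneracy characterization (Proposition \ref{Prop:1}) to restrict attention to $(a,b)$-partitions, then maximizes the edge-counting weight $w(A,B)=E(G[A])+E(G[B])+\sum_{x\in A}b(x)+\sum_{x\in B}a(x)$ rather than minimizing deficiency. The exactness of the exchange calculus for $w$ (Claim 3) pins every deficiency to exactly $1$ and shows single swaps across the partition preserve extremality (Claim 6); $C_4$-freeness then forces the deficiency sets $A^*,B^*$ to form a complete bipartite pattern with one side a single vertex (Claims 4 and 5) --- this is the codegree phenomenon you correctly anticipate, but it is only the opening move. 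The proof closes with a $5\times 5$ analysis of local configurations (A1)--(A5)/(B1)--(B5), driven by the ``special path'' lemma (Claim 10) and by \emph{simultaneous double exchanges} such as swapping $u\leftrightarrow v$ and $u_1\leftrightarrow v_1$ at once (configurations (B4) and (B5), the latter requiring yet a further exchange afterwards). None of this is recoverable from your outline.

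Two concrete reasons the plan as stated is unlikely to succeed rather than merely being unfinished. First, your potential $D$ ignores edge counts, so it cannot discriminate among the many tied partitions through which the real argument must navigate; the paper's $w$ is built precisely so that a vanishing first variation yields sharp structural information (deficiency exactly one, membership of the exchanged partition in the extremal family $\mathscr{P}$). Second, one-sided relocation of a set $S\subseteq A$ is structurally too weak: the paper's endgame configurations are symmetric across the partition and are only broken by moving vertices in both directions at once, whereas moving mass only out of $A$ merely transfers deficiency onto the tight vertices of $A\setminus S$ --- the ``shifting around'' you yourself identify, with no mechanism supplied to stop it. So your intuitions about where $C_4$-freeness and $a,b\ge 2$ must enter are correct, but the proposal is a program, not a proof: the key lemma it needs --- existence of a terminating, strictly improving (or contradiction-producing) relocation --- is exactly what is missing, and the paper's solution to it looks quite different from the cascade you envision.
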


This is tight in the following two perspectives.
First, the ranges of the functions $a, b$ cannot be relaxed to the set of integers at least one by the following example:
Take any $d$-regular connected graph $G$ and the constant functions $a=1$ and $b=d$;
then it is easy to see that none of the partitions $(A,B)$ could satisfy the properties.
Second, one also cannot lower the degree condition further by the following proposition.

\begin{prop}\label{Prop}
There exist a graph $G$, which contains no cycle of length four, and two functions $a, b: V(G)\rightarrow {\bf{N}}_{\geq 2}$
such that $d_G(x)=a(x)+b(x)-2$ for every vertex $x\in V(G)$ and moreover, for any partition $(A,B)$ of $V(G)$,
there is either a vertex $x\in A$ with $d_A(x)<a(x)$ or a vertex $x\in B$ with $d_B(x)<b(x)$.
\end{prop}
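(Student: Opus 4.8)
The plan is to realize the tightness by pushing one of the two functions to the boundary value $2$ of the allowed range $\mathbf{N}_{\geq 2}$, while letting the other function track the degree. Concretely, I would take $G$ to be any connected $C_4$-free graph with $d_G(x)\geq 2$ for all $x$, and set $a(x)\equiv 2$ together with $b(x)=d_G(x)$. Then $a(x)+b(x)-2=d_G(x)$ holds at every vertex, both functions take values in $\mathbf{N}_{\geq 2}$, and $G$ is $C_4$-free, so every hypothesis of the proposition is met; it remains only to rule out a feasible partition.

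For that I would argue by contradiction. Suppose $(A,B)$ were a partition with $d_A(x)\geq a(x)$ on $A$ and $d_B(x)\geq b(x)$ on $B$. The condition on $B$ reads $d_B(x)\geq b(x)=d_G(x)$ for every $x\in B$, and since $d_B(x)\leq d_G(x)$ always holds, this forces $d_B(x)=d_G(x)$, i.e.\ every vertex of $B$ has all of its neighbours inside $B$. Hence $B$ is a union of connected components of $G$. As $G$ is connected and $A$ is non-empty, this is impossible, so no feasible partition exists. This already proves the proposition, for instance with $G=C_5$ (where $a\equiv b\equiv 2$).

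To match the variable-function sharpness advertised in the abstract, I would instantiate $G$ as an Erd\H{o}s--R\'enyi polarity graph, which I denote $ER_q$: it is $C_4$-free, connected, and nearly $(q+1)$-regular, the $q+1$ absolute points having degree $q$ and the remaining points degree $q+1$. With the same choice $a\equiv 2$ and $b(x)=d_G(x)$, the function $b$ now genuinely varies (taking the two values $q$ and $q+1$) and can be made as large as one likes, so the bound of Theorem~\ref{Thm: main} is seen to be sharp for the variable-function statement and not merely for small constant functions. The one point needing care here is the connectivity of $ER_q$, which is standard.

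I do not expect a serious obstacle for this existence statement; the single conceptual step is recognising that tightness is driven by the boundary value $a(x)=2$, which is exactly why the range $\mathbf{N}_{\geq 2}$ cannot be enlarged. A genuinely harder variant, which this construction does not address, would be to force failure when both functions are required to be at least $3$ everywhere: there the neighbour-closure argument breaks down, since vertices of $B$ may legitimately send edges into $A$, and one would instead need a global counting argument across the cut, balancing $e(A,B)\leq |A|\max_x\bigl(b(x)-2\bigr)$ and $e(A,B)\leq |B|\max_x\bigl(a(x)-2\bigr)$ against the edge-expansion of the polarity graph, which is the delicate part.
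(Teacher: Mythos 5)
Your proof is correct, and it takes a genuinely different route from the paper's. Your key move is to set $b(x)=d_G(x)$, so the $B$-side requirement $d_B(x)\geq b(x)$ saturates to $d_B(x)=d_G(x)$: then $B$ is closed under taking neighbours, hence a union of connected components, which is impossible when $G$ is connected and $A$ is non-empty. This verifies the proposition for \emph{every} connected $C_4$-free graph of minimum degree at least $2$, and indeed the paper's own first example (a triangle with constant $a=b=2$) is precisely the special case $G=K_3$ of your scheme. Where the two arguments part ways is the paper's second, more substantial example: $ER_3$ with $a\equiv 3$, and $b=2$ on the degree-$3$ vertices $S$, $b=3$ on the degree-$4$ vertices $T$. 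There $b(x)=d_G(x)-1$ at every vertex, so your closure mechanism is unavailable; instead the paper argues structurally that every triangle inside $T$ must lie entirely in one part (a split $T$-vertex would keep at most two same-side neighbours, short of $a(x)=b(x)=3$), that connectivity of the induced subgraph on $T$ then forces all of $T$ into a single part, and that the other part, being a subset of the independent set $S$, cannot be good. What your route buys is brevity and generality: a one-line verification over a whole family of graphs, with the tightness transparently driven by saturation at the boundary of the hypothesis — the exact analogue, one notch up, of the paper's $a=1$, $b=d$ remark explaining why the range $\mathbf{N}_{\geq 1}$ fails. What the paper's route buys is a stronger qualitative message: tightness persists even when $b(x)<d_G(x)$ everywhere, i.e., when every vertex could a priori afford edges across the cut, so the obstruction is genuinely combinatorial rather than the degenerate neighbourhood-closure you exploit. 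Your closing observation, that forcing failure with both functions at least $3$ everywhere is the truly delicate variant, is apt — and worth noting that the paper's $ER_3$ example does not achieve it either, since $b=2$ on $S$.
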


When choosing $a,b$ as constants functions in Theorem \ref{Thm: main}, it strengthens Diwan's result to graphs
containing no cycles of length four, instead of graphs with girth at least five.
We state this in a more general version, namely for $k$-partitions where $k\geq 2$.

\begin{corollary}
Let $s_1,..., s_k\geq 2$ be integers. Any graph containing no cycles of length four with minimum degree at least $s_1+...+s_k-(k-1)$
can be partitioned into $k$ parts such that the subgraphs induced on the $k$ parts have minimum degree at least $s_1,..., s_k$, respectively.
\end{corollary}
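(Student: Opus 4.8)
The plan is to proceed by induction on $k$, letting Theorem~\ref{Thm: main} carry all the real content while the corollary follows by a routine peeling argument. The base case $k=2$ is precisely Theorem~\ref{Thm: main} applied with the constant functions $a \equiv s_1$ and $b \equiv s_2$, noting that the minimum degree bound $s_1 + s_2 - 1$ is exactly $a + b - 1$ and that $s_1, s_2 \geq 2$ place both functions in ${\bf{N}}_{\geq 2}$.

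For the inductive step, suppose $k \geq 3$ and that the statement holds for $k-1$ parts. Given a graph $G$ with no cycle of length four and minimum degree at least $s_1 + \cdots + s_k - (k-1)$, I would peel off the last part by applying Theorem~\ref{Thm: main} with the constant functions
$$a(x) \equiv s_1 + \cdots + s_{k-1} - (k-2) \qquad\text{and}\qquad b(x) \equiv s_k.$$
First I would verify the hypotheses of the theorem. Each $s_i \geq 2$ gives $b \equiv s_k \geq 2$, while $s_1 + \cdots + s_{k-1} \geq 2(k-1)$ yields $a \geq 2(k-1) - (k-2) = k \geq 2$, so both functions take values in ${\bf{N}}_{\geq 2}$. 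Furthermore,
$$a(x) + b(x) - 1 = \bigl(s_1 + \cdots + s_{k-1} - (k-2)\bigr) + s_k - 1 = s_1 + \cdots + s_k - (k-1),$$
which matches the minimum degree assumed on $G$ exactly, so no slack is lost. Theorem~\ref{Thm: main} then supplies a partition $(A,B)$ of $V(G)$ with $d_A(x) \geq s_1 + \cdots + s_{k-1} - (k-2)$ for every $x \in A$ and $d_B(x) \geq s_k$ for every $x \in B$.

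It remains to split $A$ further. The induced subgraph $G[A]$ is again free of cycles of length four, and its minimum degree is at least $s_1 + \cdots + s_{k-1} - \bigl((k-1)-1\bigr)$, so the inductive hypothesis partitions $A$ into $k-1$ parts $A_1, \ldots, A_{k-1}$ whose induced subgraphs have minimum degree at least $s_1, \ldots, s_{k-1}$, respectively. Setting $A_k = B$ produces the required $k$-partition; here I would only observe that for $i < k$ the degree of a vertex within $A_i$ is the same computed in $G$ as in $G[A]$, since $A_i \subseteq A$, so the constraints transfer back to $G$. Non-emptiness of all parts is automatic, as Theorem~\ref{Thm: main} and the inductive hypothesis both return genuine partitions.

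I do not anticipate any serious obstacle: the difficulty is entirely absorbed by Theorem~\ref{Thm: main}, and the only delicate points are the two bookkeeping checks above—ensuring the split values $s_1 + \cdots + s_{k-1} - (k-2)$ and $s_k$ both remain at least $2$, and that they recombine to exactly the available minimum degree so that the hypothesis of the theorem is met at each stage of the recursion.
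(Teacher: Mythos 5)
Your proof is correct, and it is exactly the argument the paper intends: the corollary is stated without proof as an immediate consequence of Theorem~\ref{Thm: main}, obtained by taking constant functions and iterating, which is precisely your induction with $a \equiv s_1+\cdots+s_{k-1}-(k-2)$ and $b \equiv s_k$. Your two bookkeeping checks (both constants lie in ${\bf{N}}_{\geq 2}$, and $a+b-1$ equals the assumed minimum degree) are the only points of substance, and you verify them correctly.
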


This also can be used for finding vertex-disjoint cycles in graphs with high minimum degree.
It is known (see \cite{T83-girth}) that if a graph has girth at least five and minimum degree at least $k+1$,
then it contains $k$ vertex-disjoint cycles.
By choosing $s_1, ..., s_k$ to be two in the above statement, we can obtain the following.

\begin{corollary}
Any graph containing no cycles of length four with minimum degree at least $k+1$ contains $k$ vertex-disjoint cycles.
\end{corollary}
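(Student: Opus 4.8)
The plan is to deduce this directly from the preceding $k$-partition corollary, combined with the elementary fact that a graph of minimum degree at least two must contain a cycle. First I would specialize that corollary by taking $s_1 = \cdots = s_k = 2$. With this choice the required degree threshold becomes
\[
s_1 + \cdots + s_k - (k-1) = 2k - (k-1) = k+1,
\]
so the hypothesis that $G$ contains no cycle of length four and has minimum degree at least $k+1$ is exactly what is needed to apply it. The corollary then yields a partition of $V(G)$ into $k$ parts $V_1,\dots,V_k$ such that each induced subgraph $G[V_i]$ has minimum degree at least two.

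Next I would invoke the standard observation that every finite graph with minimum degree at least two contains a cycle: following a longest path and using that its endpoint has a neighbour on the path other than its path-successor produces one (equivalently, one cannot delete the whole graph by repeatedly removing vertices of degree at most one, so a cycle survives). Applying this to each $G[V_i]$ gives a cycle $C_i\subseteq G[V_i]$. Note also that $G[V_i]$ having minimum degree at least two forces $|V_i|\geq 3$, so in particular none of the parts is empty, as required of a genuine partition.

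Finally, since the parts $V_1,\dots,V_k$ are pairwise disjoint, the cycles $C_1,\dots,C_k$ use pairwise disjoint vertex sets. Hence $G$ contains $k$ vertex-disjoint cycles, which is the assertion.

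As for the main difficulty: there is essentially none remaining at this stage, since all the substance has already been carried by Theorem \ref{Thm: main} through the $k$-partition corollary. The only items needing care are the arithmetic identity $2k-(k-1)=k+1$ and the elementary cycle lemma; notably, the $C_4$-freeness of $G$ is used solely to invoke the preceding corollary and plays no further role once the partition into parts of minimum degree at least two is in hand.
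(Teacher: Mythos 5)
Your proof is correct and follows exactly the route the paper intends: it derives the corollary from the $k$-partition corollary with $s_1=\cdots=s_k=2$, noting that $2k-(k-1)=k+1$, and then extracts one cycle from each part via the standard minimum-degree-two fact. The paper gives no further detail beyond this reduction, so your write-up simply makes the same argument explicit.
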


The rest of the paper is organized as follows. In Section 2,
we introduce some notations and show several propositions (including Proposition \ref{Prop}).
In Section 3 we complete the proof of Theorem \ref{Thm: main}.

\section{Notations and propositions}
Let $G$ be a graph and $f:V(G)\rightarrow \bf{N}$ be a function.
We say that $G$ is {\it $f$-degenerate} if for every subgraph $H$ of $G$ there is a vertex $u$ such that $d_{H}(u)\leq f(v)$.
For a subset $A\subseteq V(G)$, we say that $A$ is {\it $f$-degenerate} if $G[A]$ is $f$-degenerate,
and it is {\it $f$-good} if for every vertex $u\in A$, $d_A(u)\geq f(u)$.
A vertex $u$ is called an {\it $f$-vertex in $A$} if $u\in A$ and $d_A(u)=f(u)$.
It is immediate from the definitions that
\begin{prop}\label{Prop:1}
A subset $A$ of $V(G)$ does not contain any $f$-good subset if and only if it is $(f-1)$-degenerate.
\end{prop}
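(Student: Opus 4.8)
The plan is to prove the equivalence by directly unwinding the two definitions and establishing each implication of the ``if and only if'' separately. The one point requiring care is that the definition of $(f-1)$-degeneracy quantifies over \emph{all} nonempty subgraphs $H$ of $G[A]$, whereas the notion of an $f$-good subset refers to the induced subgraphs $G[S]$; I will bridge this gap using the elementary observation that deleting edges never increases any degree, so that $d_H(u)\leq d_S(u)$ whenever $H$ is a subgraph of $G$ on vertex set $S$.

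First I would handle the direction asserting that $(f-1)$-degeneracy forces the absence of an $f$-good subset. Here I argue at the level of vertex subsets: take any nonempty $S\subseteq A$ and apply the degeneracy hypothesis to the induced subgraph $G[S]$, which is itself a subgraph of $G[A]$. This produces a vertex $u\in S$ with $d_S(u)=d_{G[S]}(u)\leq f(u)-1<f(u)$, so $S$ is not $f$-good. Since $S$ was arbitrary, $A$ contains no $f$-good subset.

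For the converse, suppose $A$ contains no $f$-good subset, and let $H$ be an arbitrary nonempty subgraph of $G[A]$ with vertex set $S$. Because $S$ is not $f$-good, some vertex $u\in S$ satisfies $d_S(u)\leq f(u)-1$; and since $H$ is a subgraph of $G[S]$, we get $d_H(u)\leq d_S(u)\leq f(u)-1$, which exhibits the low-degree vertex demanded by the definition. As $H$ was arbitrary, $G[A]$ is $(f-1)$-degenerate.

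I do not expect a genuine obstacle, as the statement is essentially a reformulation matching the two definitions. The only step worth stating with some care is the passage from a general subgraph $H$ to the induced subgraph on its vertex set, since this is precisely what reconciles the quantification over subgraphs in the degeneracy condition with the quantification over vertex subsets in the definition of $f$-good.
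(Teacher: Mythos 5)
Your proof is correct and amounts to exactly the routine unwinding of the two definitions that the paper has in mind when it declares the proposition ``immediate'' (the paper supplies no written proof). Your extra care in passing from an arbitrary subgraph $H$ to the induced subgraph on its vertex set, via $d_H(u)\leq d_S(u)$, is precisely the right observation and closes the only potential gap.
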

\noindent We point out that this fact will be repeatedly used in the coming proofs.

Let $a, b:  V(G)\rightarrow \bf{N}$ be two functions.
We call a pair $(A,B)$ of disjoint subsets of $V(G)$ as a {\it feasible pair} (with respect to $a,b$) if $A$ is $a$-good and $B$ is $b$-good.
If in addition $(A,B)$ is a partition of $V(G)$, then we also call it a {\it feasible partition}.
The following nice property was first proved in \cite{Sti}.
We give a proof for the completeness.

\begin{prop}(\cite{Sti})\label{Prop:2}
Assume that for any $x\in V(G)$, $d_G(x)\ge a(x)+b(x)-1$.
If $G$ has a feasible pair, then $G$ also has a feasible partition.
\end{prop}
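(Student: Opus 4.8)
The plan is to argue by extremality. Among all feasible pairs of $G$ (one exists by hypothesis) I fix a pair $(A,B)$ with $|A|+|B|$ as large as possible. Writing $C=V(G)\setminus(A\cup B)$, it suffices to show $C=\emptyset$, for then $(A,B)$ is already a feasible partition. So suppose toward a contradiction that $C\neq\emptyset$.

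The first key step is to convert the maximality of $(A,B)$ into a degeneracy statement about $G[C]$. For $v\in C$ define $a'(v)=a(v)-d_A(v)$ and $b'(v)=b(v)-d_B(v)$. I claim that $C$ contains no $a'$-good subset in $G[C]$. Indeed, if some nonempty $S\subseteq C$ were $a'$-good in $G[C]$, then $A\cup S$ would be $a$-good: adding vertices to $A$ cannot decrease any degree inside $A$, while each $u\in S$ gains $d_{A\cup S}(u)=d_A(u)+d_S(u)\ge d_A(u)+a'(u)=a(u)$. Then $(A\cup S,B)$ would be a feasible pair strictly larger than $(A,B)$, a contradiction. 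By Proposition \ref{Prop:1}, $G[C]$ is therefore $(a'-1)$-degenerate, and by the symmetric argument it is also $(b'-1)$-degenerate.

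The second step is a double-counting argument on the number $e(G[C])$ of edges of $G[C]$. Using an $(a'-1)$-degeneracy ordering of $C$ and orienting each edge toward its later endpoint bounds the out-degrees, giving $e(G[C])\le\sum_{v\in C}(a'(v)-1)$; likewise $e(G[C])\le\sum_{v\in C}(b'(v)-1)$. Adding these and substituting the definitions of $a',b'$ yields $2e(G[C])\le\sum_{v\in C}\big(a(v)+b(v)-d_A(v)-d_B(v)-2\big)$. Now I invoke the degree hypothesis in the form $a(v)+b(v)\le d_G(v)+1$ together with $d_G(v)=d_A(v)+d_B(v)+d_C(v)$, so that each summand is at most $d_C(v)-1$. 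Since $\sum_{v\in C}d_C(v)=2e(G[C])$, this collapses to $2e(G[C])\le 2e(G[C])-|C|$, forcing $|C|\le 0$ — the desired contradiction.

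I expect the conceptual heart of the argument to be the first step: recognizing that ``no single vertex can be added'' must be strengthened to ``no whole subset can be added,'' and packaging this through Proposition \ref{Prop:1} as the simultaneous degeneracy of $G[C]$ with respect to the two residual functions $a'-1$ and $b'-1$. Once that is in place, the final contradiction is a clean counting step whose only inputs are the handshake identity and the degree condition, with the ``$-1$'' in the hypothesis $a(v)+b(v)-1$ being exactly what makes the inequality tight.
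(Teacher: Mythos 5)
Your proof is correct, but it takes a genuinely heavier route than the paper's. The paper also starts from a maximal feasible pair $(A,B)$ with $C=V(G)\setminus(A\cup B)$, but then finishes in two lines with no contradiction machinery: maximality applied to \emph{single} vertices already gives $d_A(x)\le a(x)-1$ for every $x\in C$, whence $d_{B\cup C}(x)\ge d_G(x)-d_A(x)\ge \bigl(a(x)+b(x)-1\bigr)-\bigl(a(x)-1\bigr)=b(x)$; since vertices of $B$ only gain degree, $B\cup C$ is $b$-good and $(A,B\cup C)$ is itself a feasible partition. In other words, the leftover set $C$ is absorbed wholesale into $B$, constructively. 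You instead strengthen the single-vertex maximality to ``no $a'$-good or $b'$-good subset of $C$'', convert this via Proposition \ref{Prop:1} into simultaneous $(a'-1)$- and $(b'-1)$-degeneracy of $G[C]$, and kill $C$ by double counting $e(G[C])$ against the degree hypothesis — all of which is valid (note in passing that your claim applied to singletons forces $a'(v),b'(v)\ge 1$ on $C$, so the degeneracy orderings behave as you use them), but none of which is needed here. So your stated ``conceptual heart'' — that one must pass from single vertices to whole subsets — is actually a misreading of where the difficulty lies: singleton maximality suffices once you notice the complement can be donated to $B$ entirely. What your argument buys is a self-contained counting template of Stiebitz type (a set cannot be simultaneously degenerate for two residual functions under this degree condition), which is closer in spirit to the techniques the paper deploys later for Theorem \ref{Thm: main}; what the paper's argument buys is brevity and the explicit construction of the final partition.
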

\begin{proof}
Choose a feasible pair $(A,B)$ in $G$ such that $A\cup B$ is maximal.
We show that $(A,B)$ must be a partition.
Suppose $C=V(G)\backslash (A\cup B)$ is non-empty.
Then any $x\in C$ satisfies that $d_A(x)\leq a(x)-1$, as otherwise $(A\cup \{x\}, B)$ is also feasible.
This implies that for any $x\in C$, $d_{B\cup C}(x)\geq b(x)$ and thus $B\cup C$ is $b$-good,
completing the proof.
\end{proof}

We now prove Proposition \ref{Prop}.

\medskip

{\noindent \it Proof of Proposition \ref{Prop}.}
One such example is a triangle with constant functions $a=b=2$.
We provide other examples by considering Erd\H{o}s-Renyi polarity graphs.
Let $q$ be a prime and $V$ be a 3-dimensional vector space over $\mathbb{F}_q$.
The Erd\H{o}s-Renyi polarity graph $ER_q$ is a simple graph whose vertices are the 1-dimensional subspaces $[\vec{v}]$ in $V$,
where two vertices $[\vec{v}]$ and $[\vec{w}]$ are adjacent if and only if the vectors $\vec{v}$ and $\vec{w}$ are orthogonal.
It is well-known that $ER_q$ contains no cycles of length four.

Let $q=3$. Then the graph $ER_3$ has $q^2+q+1=13$ vertices as well as the following properties.
If we let $S$ be the set of vertices of degree 3 in $ER_3$,
then $S$ is an independent set of size 4 and $T:=V(ER_3)\backslash S$ consists of all vertices of degree 4.
Moreover, the induced subgraph on $T$ is connected and its edges can be partitioned into four edge-disjoint triangles.
Choose functions $a,b$ such that $a$ is the constant function three and $b(x)=2$ if $x\in S$ and $b(x)=3$ if $x\in T$.
Then for every vertex $x$ in $ER_3$ it holds that $d_{ER_3}(x)=a(x)+b(x)-2$.
It remains to show that there is no feasible partition in $ER_3$.
Suppose for a contradiction that there exists a feasible partition $(A,B)$.
We claim that the vertices of any triangle $uvw$ in $T$ must belong to the same part.
Indeed, if $u$ is in one part and $v, w$ are in another part,
then $u\in T$ has at most two neighbors in its own part,
contradicting that $(A,B)$ is feasible with respect to the functions $a, b$.
Since the induced subgraph on $T$ is connected,
this implies that $T$ is contained in one part, say $A$.
Then $B$ is just a subset of $S$, which is an independent set, a contradiction.
This completes the proof that $ER_3$ and the so-defined functions $a,b$ serve as an example to this proposition.

Using similar arguments, one can show that $ER_2$ (plus some proper functions $a,b$) is also an example for this proposition.
\qed

\medskip

In the coming proof, we sometime adopt the notations $u\sim v$ and $u\not\sim v$ to express the situation
that the vertices $u,v$ are adjacent or not, respectively.

\section{The proof of Theorem \ref{Thm: main}}
Throughout this section, let $G$ be a graph which contains no cycles of length four
and $a, b:  V(G)\rightarrow {\bf{N}}_{\geq 2}$ be two functions
such that for any $x\in V(G)$, $d_G(x)\geq a(x)+b(x)-1$.
Suppose for a contradiction that $G$ contains no feasible partitions.
By Proposition \ref{Prop:2}, we have the fact that
\begin{equation}\label{equ:no pair}
\text{there is no feasible pairs in } G.
\end{equation}

Our proof proceeds with a sequence of claims.

\medskip

\noindent {\bf Claim 1.} It suffices to assume that for any $x\in V(G)$, $d_G(x)=a(x)+b(x)-1$.

\begin{proof}
Indeed we may increase $a,b$ to get functions $a',b'$ such that $a'\geq a, ~b'\geq b$
and $d_G(x)=a'(x)+b'(x)-1$ for all $x\in V(G)$.
Now suppose Theorem \ref{Thm: main} holds under the assumption of these inequalities.
As $a'\geq a$ and $b'\geq b$, any feasible partition of $V(G)$ with respect to $a', b'$
is also a feasible partition with respect to $a, b$. This proves Claim 1.
\end{proof}

\noindent {\bf Definition 1.} A partition $(A,B)$ of $V(G)$ is an {\it $(a,b)$-partition} if $A$ is $(a-1)$-degenerate and $B$ is $(b-1)$-degenerate.

\medskip

\noindent {\bf Claim 2.} There exist $(a,b)$-partitions in $G$.

\begin{proof}
Consider a minimal $a$-good subset $A\subseteq V(G)$ (note that such subsets exist, as $V(G)$ is one).
So $|A|\geq 2$. Let $B=V\backslash A$.
Clearly $B$ contains no $b$-good subsets, as otherwise there exist feasible pairs, contradicting \eqref{equ:no pair}.
So, by Proposition \ref{Prop:1}, $B$ is $(b-1)$-degenerate.
By the minimality of $A$, there exists a vertex $x\in A$ with $d_A(x)=a(x)$.
By Claim 1, $d_B(x)=b(x)-1$ and thus $B\cup \{x\}$ is also $(b-1)$-degenerate.
On the other hand, $A\backslash\{x\}$ is non-empty and $(a-1)$-degenerate (again by the minimality of $A$).
So $(A\backslash\{x\}, B\cup \{x\})$ gives an $(a,b)$-partition.
\end{proof}

For any partition $(A,B)$ in $G$, we define a weight function as following:
\begin{equation}\label{equ:w(A,B)}
w(A,B):=E(G[A])+E(G[B])+\sum_{x\in A}b(x)+\sum_{x\in B}a(x).
\end{equation}

\noindent {\bf Claim 3.} For a partition $(A,B)$, let $u\in A, v\in B$ be two vertices
such that $d_A(u)=a(u)-\alpha$ and $d_B(v)=b(v)-\beta$.
Let $\delta=1$ if $u,v$ are adjacent and $\delta=0$ otherwise.
Then $$w(A\backslash \{u\}, B\cup \{u\})-w(A,B)=2\alpha -1,$$
$$w(A\cup \{v\}\backslash \{u\}, B\cup\{u\}\backslash \{v\})-w(A,B)=2(\alpha+\beta-1-\delta).$$

\begin{proof}
This follows directly from the definition and Claim 1.
We only show the second identity. Its left hand side equals
\begin{equation*}
-d_A(u)-d_B(v)-b(u)-a(v)+(d_B(u)-\delta)+(d_A(v)-\delta)+b(v)+a(u).
\end{equation*}
After simplifying, this gives $2(\alpha+\beta-1-\delta)$.
\end{proof}

\noindent {\bf Definition 2.} Let $\mathscr{P}$ be the family consisting of all $(a,b)$-partitions $(A,B)$,
which attain the maximum weight $w(A,B)$ among all $(a,b)$-partitions in $G$.
For any $(A,B)\in \mathscr{P}$, define $$A^*=\{x\in A~|~d_A(x)\leq a(x)-1\} \text{ and } B^*=\{x\in B~|~d_B(x) \leq b(x)-1\}.$$

It is easy to see that both $A^*$ and $B^*$ are non-empty. So for any $x\in B^*$,
we have $|A|\geq d_A(x)\geq a(x)\geq 2$. Hence, we see that both $A$ and $B$ contain at least two vertices.

\medskip

\noindent \textbf{Claim 4.} For any $(A,B)\in \mathscr{P}$, every vertex in $A^*$ is adjacent to every vertex in $B^*$.

\begin{proof}
Suppose that there exist non-adjacent vertices $u\in A^*$ and $v\in B^*$.
Let $d_A(u)=a(u)-\alpha$ and $d_B(v)=b(v)-\beta$. So $\alpha, \beta\geq 1$.

First consider $u\in A$. We have $|A|\geq 2$, so $A\backslash \{u\}$ is non-empty.
By Claim 3, $w(A\backslash \{u\},B\cup \{u\})-w(A,B)=2\alpha-1\geq 1$,
thus $(A\backslash \{u\},B\cup \{u\})$ cannot be an $(a,b)$-partition.
Since $A\backslash \{u\}$ is $(a-1)$-degenerate,
this implies that $B\cup \{u\}$ cannot be $(b-1)$-degenerate.
Therefore there exists a $b$-good subset $B'\subseteq B\cup \{u\}$.
As $u,v$ are not adjacent, $d_{B\cup \{u\}}(v)=d_B(v)\leq b(v)-1$, so $B'\subseteq B\cup \{u\}\backslash \{v\}.$

By considering $v\in B$, similarly we can find an $a$-good subset $A'\subseteq A\cup \{v\}\backslash \{u\}$.
Then, $(A',B')$ forms a feasible pair, a contradiction to \eqref{equ:no pair}.
The proof of Claim 4 is completed.
\end{proof}

\noindent \textbf{Claim 5}. For any $(A,B)\in \mathscr{P}$, either $A^*$ or $B^*$ consists of exactly one vertex.
Moreover, every vertex in $V(G)\backslash A^*$ is adjacent to at most one vertex in $B^*$ and
every vertex in $V(G)\backslash B^*$ is adjacent to at most one vertex in $A^*$.

\begin{proof}
Otherwise there is some $C_4$ by Claim 4 (note that both $A^*, B^*$ are non-empty).
\end{proof}

\noindent \textbf{Claim 6}. For any $(A,B)\in \mathscr{P}$, $u\in A^*$ and $v\in B^*$,
we have $d_A(u)=a(u)-1,d_B(v)=b(v)-1$ and $(A\cup \{v\}\backslash \{u\},B\cup \{u\}\backslash \{v\})\in \mathscr{P}$.
\begin{proof}
Let $d_A(u)=a(u)-\alpha$ and $d_B(v)=b(v)-\beta$, where $\alpha, \beta\geq 1$.

We first show that $(A\cup \{v\}\backslash \{u\},B\cup \{u\}\backslash \{v\})$ is an $(a,b)$-partition.
Suppose not. Without loss of generality, we may assume that there exists a $b$-good subset $B'\subseteq B\cup \{u\}\backslash \{v\}$.
Then we must have $u\in B'$. If $A\cup \{v\}$ is $(a-1)$-degenerate, then $(A\cup \{v\}, B\backslash \{v\})$ is an $(a,b)$-partition
and by Claim 3, $w(A\cup \{v\}, B\backslash \{v\})-w(A,B)=2\beta-1\geq 1$, a contradiction.
Therefore there exists an $a$-good subset $A'\subseteq A\cup \{v\}$.
If $u\notin A'$, then $(A',B')$ is a feasible pair, a contradiction.
So $u\in A'$. Then the only possibility is that $d_{A}(u)= a(u)-1$.
This also shows $d_B(u)=b(u)$ and thus $d_{B\cup \{u\}\backslash \{v\}}(u)=b(u)-1$, contradicting with $u\in B'$.
So indeed $(A\cup \{v\}\backslash \{u\},B\cup \{u\}\backslash \{v\})$ is an $(a,b)$-partition.

By Claim 3 again, $w(A\cup \{v\}\backslash \{u\},B\cup \{u\}\backslash \{v\})-w(A,B)=2(\alpha+\beta-2)\geq 0$.
By the maximality of $w(A,B)$, $\alpha=\beta=1$. So $(A\cup \{v\}\backslash \{u\},B\cup \{u\}\backslash \{v\})\in \mathscr{P}$.
\end{proof}

\noindent \textbf{Claim 7}. For any $(A,B)\in \mathscr{P}$, $|A\backslash A^*|\geq 2$ and $|B\backslash B^*|\geq 2$.
\begin{proof}
By Claims 5 and 6, we may assume $B^*=\{v\}$ and $d_B(v)= b(v)-1\geq 1$.
Choose any $v_1\in B\backslash B^*$. Since $d_B(v_1)\geq b(v_1)\geq 2$,
there exists a neighbor of $v_1$ in $B\backslash B^*$. So $|B\backslash B^*|\geq 2$.
Similarly, if $|A^*|=1$, then we also have $|A\backslash A^*|\geq 2$.
Assume $|A^*|\geq 2$.

If there exists some vertex $u_1\in A\backslash A^*$,
then $d_A(u_1)\geq a(u_1)\geq 2$. By Claim 5, $u_1$ has at most one neighbor in $A^*$ and
thus at least one neighbor in $A\backslash A^*$, therefore $|A\backslash A^*|\geq 2$.

So we may assume $A=A^*=\{u_1,...,u_\ell\}$.
By Claim 6, $d_A(u_i)=a(u_i)-1\geq 1$. This, together with Claim 5,
shows that in fact any $u_i$ has exact one neighbour in $A$ and $a(u_i)=2$.
Since all vertices in $A$ are adjacent to $v$,
we see that $A\cup \{v\}$ induces a union of triangles which pairwise intersect at $v$.
As $d_A(v)=a(v)$, $A\cup\{v\}$ is $a$-good.
For any $x\in B\backslash \{v\}$, there is at most one neighbor of $x$ in $A\cup \{v\}$, as otherwise there is a $C_4$.
So $d_{B\backslash \{v\}}(x)\geq a(x)+b(x)-2\geq b(x)$.
We then find a feasible partition $(A\cup \{v\}, B\backslash \{v\})$.
\end{proof}

\noindent {\bf Definition 3.} For any $(A,B)\in \mathscr{P}$,
we define $$A^\diamond=\{u\in A\backslash A^*~| ~d_{A\backslash A^*}(u)\leq a(u)-1\}
\text{~ and ~} B^\diamond=\{v\in B\backslash B^*~| ~d_{B\backslash B^*}(v)\leq a(v)-1\}.$$

\noindent \textbf{Claim 8}. For any $(A,B)\in \mathscr{P}$, the subsets $A^\diamond$ and $B^\diamond$ are non-empty.
And any $u\in A^\diamond$ has exactly one neighbor in $A^*$ and $d_A(u)=a(u)$;
similarly, any $v\in B^\diamond$ has exactly one neighbor in $B^*$ and $d_B(v)=b(v)$.

\begin{proof}
Claim 7 shows that $A\backslash A^*$ and $B\backslash B^*$ induce two non-empty subgraphs,
which are $(a-1)$-degenerate and $(b-1)$-degenerate, respectively.
So $A^\diamond$ and $B^\diamond$ are non-empty.
It suffices to consider $u\in A^\diamond$.
By Claim 5, $u$ has at most one neighbor in $A^*$ and thus $d_A(u)\leq a(u)$.
But $u\notin A^*$, which means $d_A(u)\geq a(u)$.
This shows that $d_{A}(u)=a(u)$ and $u$ has exactly one neighbor in $A^*$.
\end{proof}

\noindent \textbf{Claim 9}. For any $(A,B)\in \mathscr{P}$, there exists one of the following five configurations in $A$ (see Figure 1):
\begin{itemize}
\item [(A1)] two $a$-vertices $u_1,u_2$ in $A$ are adjacent to the same vertex $u\in A^*$,

\item [(A2)] two $a$-vertices $u_1,u_2$ in $A$ are adjacent to $u, u'\in A^*$, respectively,

\item [(A3)] there exist two $a$-vertices $u_1,u_2$ in $A$ and a vertex $u\in A^*$ such that $u_1\sim u_2$, $u_1\sim u$ and $u_2\not\sim u$,

\item [(A4)] there exist an $a$-vertex $u_1$ in $A$, an $(a+1)$-vertex $u_2$ in $A$ and a vertex $u\in A^*$ such that $u_1, u_2, u$ form a triangle, and
\item [(A5)] there exist an $a$-vertex $u_1$ in $A$, an $(a+1)$-vertex $u_2$ in $A$ and two vertices $u, u'\in A^*$ such that $u_1\sim u_2$, $u_1\sim u$ and $u_2\sim u'$.
\end{itemize}

\begin{figure}[ht!]\label{fig:A}
	\begin{minipage}{0.5\linewidth}
		\centering
		\begin{tikzpicture}
		[scale=0.7, auto=left,   roundnode/.style={circle, draw, fill=black ,
			inner sep=0pt, minimum width=4pt}]
		\draw[gray, thick] (-3,0) rectangle (3,2.5) ;
		\draw[gray, thick] (-3,2.5) rectangle (3,5);
		\filldraw[black] (-3,4.5) circle (0pt) node[anchor=west]  {\large $A^*$ };
		\filldraw[black] (-3,2 ) circle (0pt) node[anchor=west]  {\large $A\backslash A^*$ };
		\node[roundnode](u) at (0,3.7)[label=north:$u$]{};
		\node[roundnode] (u1) at (-1,1.2) [label=south:$u_1$]{};
		\node[roundnode] (u2) at (1, 1.2) [label=south:$u_2$]{};
		\foreach \from/\to in {u/u1, u/u2}
		\draw[black,very thick] (\from) -- (\to);
		\end{tikzpicture}
		\caption*{(A1)}
\medskip
	\end{minipage}
	\begin{minipage}{0.5\linewidth}
		\centering
		\begin{tikzpicture}
		[scale=0.7, auto=left,   roundnode/.style={circle, draw, fill=black ,
			inner sep=0pt, minimum width=4pt}]
		\draw[gray, thick] (-3,0) rectangle (3,2.5) ;
		\draw[gray, thick] (-3,2.5) rectangle (3,5);
		\filldraw[black] (-3,4.5) circle (0pt) node[anchor=west]  {\large $A^*$ };
		\filldraw[black] (-3,2 ) circle (0pt) node[anchor=west]  {\large $A\backslash A^*$ };
		\node[roundnode](u) at (-0.5,3.7)[label=north:$u$]{};
		\node[roundnode](u') at (1.5,3.7)[label=north:$u'$]{};
		\node[roundnode] (u1) at (-1,1.2) [label=south:$u_1$]{};
		\node[roundnode] (u2) at (1, 1.2) [label=south:$u_2$]{};
		\foreach \from/\to in {u/u1, u'/u2}
		\draw[black,very thick] (\from) -- (\to);
		\end{tikzpicture}
		\caption*{(A2)}
\medskip
	\end{minipage}
	
	
	\begin{minipage}{0.32\linewidth}
		\centering
		\begin{tikzpicture}
		[scale=0.7, auto=left,   roundnode/.style={circle, draw, fill=black ,
			inner sep=0pt, minimum width=4pt}]
		\draw[gray, thick] (-3,0) rectangle (3,2.5) ;
		\draw[gray, thick] (-3,2.5) rectangle (3,5);
		\filldraw[black] (-3,4.5) circle (0pt) node[anchor=west]  {\large $A^*$ };
		\filldraw[black] (-3,2 ) circle (0pt) node[anchor=west]  {\large $A\backslash A^*$ };
		\node[roundnode](u) at (0,3.7)[label=north:$u$]{};
		\node[roundnode] (u2) at (-1,1.2) [label=south:$u_2$]{};
		\node[roundnode] (u1) at (1, 1.2) [label=south:$u_1$]{};
		\foreach \from/\to in {u/u1, u2/u1}
		\draw[black,very thick] (\from) -- (\to);
		\end{tikzpicture}
		\caption*{(A3)}
	\end{minipage}
	\begin{minipage}{0.32\linewidth}
		\centering
		\begin{tikzpicture}
		[scale=0.7, auto=left,   roundnode/.style={circle, draw, fill=black ,
			inner sep=0pt, minimum width=4pt}]
		\draw[gray, thick] (-3,0) rectangle (3,2.5) ;
		\draw[gray, thick] (-3,2.5) rectangle (3,5);
		\filldraw[black] (-3,4.5) circle (0pt) node[anchor=west]  {\large $A^*$ };
		\filldraw[black] (-3,2 ) circle (0pt) node[anchor=west]  {\large $A\backslash A^*$ };
		\node[roundnode](u) at (0,3.7)[label=north:$u$]{};
		\node[roundnode] (u2) at (-1,1.2) [label=south:$u_2$]{};
		\node[roundnode] (u1) at (1, 1.2) [label=south:$u_1$]{};
		\foreach \from/\to in {u/u2, u/u1,u2/u1}
		\draw[black,very thick] (\from) -- (\to);
		\end{tikzpicture}
		\caption*{(A4)}
	\end{minipage}
	\begin{minipage}{0.32\linewidth}
		\centering
		\begin{tikzpicture}
		[scale=0.7, auto=left,   roundnode/.style={circle, draw, fill=black ,
			inner sep=0pt, minimum width=4pt}]
		\draw[gray, thick] (-3,0) rectangle (3,2.5) ;
		\draw[gray, thick] (-3,2.5) rectangle (3,5);
		\filldraw[black] (-3,4.5) circle (0pt) node[anchor=west]  {\large $A^*$ };
		\filldraw[black] (-3,2 ) circle (0pt) node[anchor=west]  {\large $A\backslash A^*$ };
		\node[roundnode](u') at (-0.5,3.7)[label=north:$u'$]{};
		\node[roundnode](u) at (1.5,3.7)[label=north:$u$]{};
		\node[roundnode] (u2) at (-1,1.2) [label=south:$u_2$]{};
		\node[roundnode] (u1) at (1, 1.2) [label=south:$u_1$]{};
		\foreach \from/\to in {u'/u2, u/u1,u2/u1}
		\draw[black,very thick] (\from) -- (\to);
		\end{tikzpicture}
		\caption*{(A5)}
	\end{minipage}
	\caption{The five configurations in $A$}
\end{figure}

And the analog also holds for $B$ (call the five configurations as (B1)-(B5), respectively).

\begin{proof}
If $A^\diamond$ has at least two vertices (say $u_1, u_2$), then by Claim 8, each of $u_1, u_2$ has exactly one neighbor in $A^*$.
This leads to the configuration (A1) or (A2).

If $A^\diamond$ has exactly one vertex (say $u_1$), then by Claim 7,
$A\backslash (A^*\cup \{u_1\})$ is non-empty and also $(a-1)$-degenerate.
Then $u_1$ has a neighbour $u_2\in A\backslash (A^*\cup \{u_1\})$ satisfying that $d_{A\backslash A^*}(u_2)= a(u_2)$.
This leads to three possible configurations: (A3) when $u_2$ has no neighbour in $A^*$, (A4) when $u_1,u_2$ have the same neighbour in $A^*$, and (A5) when $u_1,u_2$ have different neighbours in $A^*$. This proves Claim 9.
\end{proof}

\noindent {\bf Definition 4.} For any $(A,B)\in \mathscr{P}$, a path $u_1\sim u\sim v\sim v_1$ is called a {\it special path},
if $u\in A^*$, $v\in B^*$, $u_1$ is an $a$-vertex in $A$, and $v_1$ is a $b$-vertex in $B$.

\medskip

\noindent \textbf{Claim 10}. For any special path $u_1\sim u\sim v\sim v_1$, either $u_1v\in E(G)$ or $v_1u\in E(G)$.

\begin{proof}
Suppose that $u_1v, v_1u\notin E(G)$.
Let $(A',B')$ be the new partition obtained from $(A,B)$ by exchanging $u$ and $v$.
By Claim 6, $(A',B')\in \mathscr{P}$.
Also $u_1$ becomes an $(a-1)$-vertex in $A'$ and $v_1$ becomes a $(b-1)$-vertex in $B'$.
Then by Claim 4, we have $u_1v_1\in E(G)$. So $u_1,u,v,v_1$ form a cycle of length four, a contradiction.
\end{proof}

Now let us fix a partition $(A,B)\in \mathscr{P}$.
So by Claim 9, there exist two configurations, say (Ai) in $A$ and (Bj) in $B$.
In what follows, we will finish the proof by showing that
any combination of (Ai) and (Bj) for all $1\leq i, j\leq 5$ will derive some contradiction
(either finding a cycle of length four or contradicting the above claims).

Take the vertex $u\in A^*$ and the $a$-vertex $u_1$ in $A$ from Claim 9;
and call the analogous vertices of $u, u_1$ in $B$ as $v, v_1$, respectively.
Note that in any situation, we have that $u\in A^*, v\in B^*$, $u_1$ is an $a$-vertex in $A$ and $v_1$ is a $b$-vertex in $B$.
Therefore, $u_1\sim u\sim v\sim v_1$ is a special path for $(A,B)\in \mathscr{P}$.
By Claim 10, we have either $u_1v\in E(G)$ or $v_1u\in E(G)$.
Without loss of generality,
\begin{equation}\label{equ:u1v}
\text{we assume that } u_1v\in E(G) \text{ and } v_1u\notin E(G).
\end{equation}
If the configuration (A4) or (A5) occurs, then \eqref{equ:u1v} will force a $C_4$, a contradiction.
Therefore, there are only 3 configurations left (under the assumption \eqref{equ:u1v}), namely (A1), (A2) or (A3).
We distinguish among these three cases.

\bigskip

\noindent \textbf{Case 1:} Configuration (A1) occurs.

\medskip

We see that $u_2\sim u\sim v\sim v_1$ is a special path. By Claim 10, either $u_2v\in E(G)$ or $uv_1\in E(G)$.
If $u_2v\in E(G)$, then $u_1,u_2,u,v$ form a $C_4$ and if $uv_1\in E(G)$, then $u_1,v_1,u,v$ form a $C_4$.
This shows that under the assumption \eqref{equ:u1v}, (A1) does not occurs.

\medskip

\noindent \textbf{Case 2:} Configuration (A2) occurs.

\medskip

In this case, we will show that either there exists a $C_4$ or this can be reduced to the configuration (A3).
Note that we have $|A^*|\geq 2$. So $B^*=\{v\}$. So only the configurations (B1), (B3), and (B4) can occur in $B$.

First suppose that (B3) occurs.\footnote{In this paragraph we will only use the vertices $u_1,u\in A$,
so this shows that under the assumption \eqref{equ:u1v}, (B3) does not occur no matter which configuration is in $A$.}
Then there is another $b$-vertex $v_2$ (other than $v_1$) in $B$ such that $v_2\sim v_1$ and $v_2\not\sim v$.
Let $(A',B')$ be the partition obtained from $(A,B)$ by exchanging $u$ and $v_1$.
We may easily infer that $uv_1, uv_2, u_1v_1, u_1v_2\notin E(G)$ (as otherwise there is a $C_4$).
So $u_1,v_1$ are $(a-1)$-vertices in $A'$, $v,v_2$ are $(b-1)$-vertices in $B'$, and $u$ is a $b$-vertex in $B'$.
We claim that $(A',B')\in \mathscr{P}$.
We first observe that $A'$ is $(a-1)$-degenerate;
otherwise, as $A$ contains no $a$-good subsets,
there must exist an $a$-good subset in $A'$ which contains $v_1$,
but $v_1$ is an $(a-1)$-vertex in $A'$, a contradiction.
If there exists a $b$-good subset $B''\subseteq B'$, then similarly $u\in B''$ and so $d_{B''}(u)=d_{B'}(u)=b(u)$,
which shows that all neighbors of $u$ in $B'$ should also belong to $B''$.
But the neighbor $v$ of $u$ is a $(b-1)$-vertex in $B'$, a contradiction.
So $B'$ is $(b-1)$-degenerate and thus $(A',B')$ is an $(a,b)$-partition.
By Claim 3, we also have $w(A',B')-w(A,B)=0$. This proves that $(A',B')\in \mathscr{P}$.
Then by Claim 4, $u_1,v_1,v,v_2$ give a $C_4$. This shows that (B3) does not occur.

Now we consider when (B1) or (B4) occurs.
We claim that all vertices in $A^\diamond$ are adjacent to $v$.
Consider any vertex $w\in A^\diamond\backslash \{u_1\}$ and assume $wv\notin E(G)$.
By Claim 8, $w$ is an $a$-vertex in $A$ and adjacent to exactly one vertex in $A^*$ (say $w'$).
If $w'=u$, then the configuration (A1) occurs.
So we have $w'\neq u$. Then the special path $w\sim w'\sim v\sim v_1$ forces either $wv\in E(G)$ or $w'v_1\in E(G)$.
So $w'v_1\in E(G)$. In (B4), $w',v,v_1,v_2$ will form a $C_4$.
Now let us consider (B1), where $v_2$ is a $b$-vertex in $B$ and $v_2v\in E(G)$.
Then $w\sim w'\sim v\sim v_2$ is also a special path.
As $wv\notin E(G)$, we must have $w'v_2\in E(G)$,
which again gives a $C_4$ (with vertices $w',v,v_1,v_2$). This proves the claim.

We see that all vertices in $A^*\cup A^\diamond$ are adjacent to $v$ and
thus any vertex in $A$ has at most one neighbor in $A^*\cup A^\diamond$ (otherwise, there is a $C_4$).
This implies that $A\backslash (A^*\cup A^\diamond)\neq \emptyset$,
as otherwise any vertex $w\in A^\diamond$ has $d_{A^*\cup A^\diamond}(w)=d_A(w)=a(w)\geq 2$, a contradiction.
Thus, there exists a vertex $x\in A\backslash (A^*\cup A^\diamond)$ with $d_{A\backslash (A^*\cup A^\diamond)}(x)\leq a(x)-1$.
But $d_{A}(x)\geq a(x)$ (as $x\notin A^*$) and $x$ has at most one neighbor in $A^*\cup A^\diamond$.
This shows that $x$ is an $a$-vertex in $A$ and has exactly one neighbor (say $x'$) in $A^*\cup A^\diamond$.
Also as $x\notin A^\diamond$, we have $d_{A\backslash A^*}(x)\geq a(x)$, which shows that $x'\in A^\diamond$.
Let $x''$ be the unique neighbor of $x'$ in $A^*$ (by Claim 8).
Now the three vertices $x,x',x''$ give the configuration (A3) in $A$.
Note that we also have $x'v\in E(G)$ and $v_1x\notin E(G)$ (i.e., the equivalent assumption as \eqref{equ:u1v}).
Therefore, it suffices to consider the following case.

\medskip

\noindent \textbf{Case 3:} Configuration (A3) occurs.

\medskip

There are 5 configurations in $B$ to consider.

\begin{figure}[ht!]
	\begin{minipage}{0.32\linewidth}
		\centering
		\begin{tikzpicture}
		[scale=0.6, auto=left,   roundnode/.style={circle, draw, fill=black ,inner sep=0pt, minimum width=4pt}]
		\draw[gray, thick] (-3,0) rectangle (5,2.5) ;
		\draw[gray, thick] (-3,2.5) rectangle (5,5);
		\draw[gray, thick] (1,0)--(1,5);
		\filldraw[black] (-3,4.5) circle (0pt) node[anchor=west]  {\small $A^*$ };
		\filldraw[black] (-3,2 ) circle (0pt) node[anchor=west]  {\small $A\backslash A^*$ };
		\filldraw[black] (3.9,4.5) circle (0pt) node[anchor=west]  {\small $B^*$ };
		\filldraw[black] (3.3,2 ) circle (0pt) node[anchor=west]  {\small $B\backslash B^*$ };
		\node[roundnode](u) at (0,3.7)[label=north:$u$]{};
		\node[roundnode] (u2) at (-2,1.2) [label=south:$u_2$]{};
		\node[roundnode] (u1) at (-0.5, 1.2) [label=south:$u_1$]{};
		
		\node[roundnode](v) at (2,3.7)[label=north:$v$]{};
		\node[roundnode] (v1) at (2.5,1.2) [label=south:$v_1$]{};
		\node[roundnode] (v2) at (4, 1.2) [label=south:$v_2$]{};
		\foreach \from/\to in {u/u1, u2/u1,u/v,v/v1,v/v2,u1/v}
		\draw[black,very thick]  (\from) -- (\to);
		\end{tikzpicture}
		\caption*{(A3)+(B1)}
	\end{minipage}
	\begin{minipage}{0.32\linewidth}
		\centering
		\begin{tikzpicture}
		[scale=0.6, auto=left,   roundnode/.style={circle, draw, fill=black ,inner sep=0pt, minimum width=4pt}]
		\draw[gray, thick] (-3,0) rectangle (5,2.5) ;
		\draw[gray, thick] (-3,2.5) rectangle (5,5);
		\draw[gray, thick] (1,0)--(1,5);
		\filldraw[black] (-3,4.5) circle (0pt) node[anchor=west]  {\small $A^*$ };
		\filldraw[black] (-3,2 ) circle (0pt) node[anchor=west]  {\small $A\backslash A^*$ };
		\filldraw[black] (3.9,4.5) circle (0pt) node[anchor=west]  {\small $B^*$ };
		\filldraw[black] (3.3,2 ) circle (0pt) node[anchor=west]  {\small $B\backslash B^*$ };
		\node[roundnode](u) at (0,3.7)[label=north:$u$]{};
		\node[roundnode] (u2) at (-2,1.2) [label=south:$u_2$]{};
		\node[roundnode] (u1) at (-0.5, 1.2) [label=south:$u_1$]{};
		
		\node[roundnode](v) at (2,3.2)[label=right:$v$]{};
		\node[roundnode](v') at (2.5,4.2)[label=right:$v'$]{};
		\node[roundnode] (v1) at (2.5,1.2) [label=south:$v_1$]{};
		\node[roundnode] (v2) at (3.8, 1.2) [label=south:$v_2$]{};
		\foreach \from/\to in {u/u1, u2/u1,v/v1,v'/v2,u/v,u/v',u1/v}
		\draw[black,very thick] (\from) -- (\to);
		\end{tikzpicture}
		\caption*{(A3)+(B2)}
	\end{minipage}
	\begin{minipage}{0.32\linewidth}
		\centering
		\begin{tikzpicture}
		[scale=0.6, auto=left,   roundnode/.style={circle, draw, fill=black ,inner sep=0pt, minimum width=4pt}]
		\draw[gray, thick] (-3,0) rectangle (5,2.5) ;
		\draw[gray, thick] (-3,2.5) rectangle (5,5);
		\draw[gray, thick] (1,0)--(1,5);
		\filldraw[black] (-3,4.6) circle (0pt) node[anchor=west]  {\small $A^*$ };
		\filldraw[black] (-3.1,2.1 ) circle (0pt) node[anchor=west]  {\small $A\backslash A^*$ };
		\filldraw[black] (4,4.6) circle (0pt) node[anchor=west]  {\small $B^*$ };
		\filldraw[black] (3.3,2.1 ) circle (0pt) node[anchor=west]  {\small $B\backslash B^*$ };
		\node[roundnode](u) at (0,3.7)[label=north:$u$]{};
		\node[roundnode] (u2) at (-2,1.2) [label=south:$u_2$]{};
		\node[roundnode] (u1) at (-0.5, 1.2) [label=south:$u_1$]{};
		
		\node[roundnode](v) at (2,3.7)[label=north:$v$]{};
		\node[roundnode] (v1) at (2.5,1.2) [label=south:$v_1$]{};
		\node[roundnode] (v2) at (4, 1.2) [label=south:$v_2$]{};
		\foreach \from/\to in {u/u1, u2/u1,v/v1,v/v2,v1/v2,u/v,u1/v}
		\draw[black,very thick]  (\from) -- (\to);
		\end{tikzpicture}
		\caption*{(A3)+(B4)}
	\end{minipage}
    \caption{Case 3}
\end{figure}

Suppose (B1) occurs. So there exists a $b$-vertex $v_2$ in $B$ adjacent to $v$ such that $v_2\neq v_1$.
Let $(A_1,B_1)$ be obtained from $(A,B)$ by exchanging $u$ and $v_1$.
One can easily see that $u_1v_1, u_2v_1, u_2u, v_2u\notin E(G)$ (as otherwise there is a $C_4$).
So $v_1,u_1$ are $(a-1)$-vertices in $A_1$, $u_2$ is an $a$-vertex in $A_1$, $v$ is a $(b-1)$-vertex in $B_1$,
and $u$ is a $b$-vertex in $B_1$.
It is worth noting that $v_2$ may be a $(b-1)$-vertex or $b$-vertex in $B_1$,
depending on whether $v_1v_2\in E(G)$ or not, respectively.
We claim that $(A_1,B_1)\in \mathscr{P}$.
Indeed, $A_1$ is $(a-1)$-degenerate and $B_1$ is $(b-1)$-degenerate (via the same argument as in the second paragraph of Case 2).
By Claim 3, $w(A_1,B_1)=w(A,B)$, which shows $(A_1,B_1)\in \mathscr{P}$.
If $v_2$ is a $(b-1)$-vertex in $B_1$, then by Claim 4, $u_1,v_1, v, v_2$ form a $C_4$.
Otherwise $v_2$ is a $b$-vertex in $B_1$. Then we get a special path $u_2\sim u_1\sim v\sim v_2$,
implying that $u_1v_2$ or $u_2v\in E(G)$. In either case, there is a $C_4$.
This shows that (B1) cannot occur.

Suppose (B2) occurs. Then there exist a $(b-1)$-vertex $v'$ and two $b$-vertices $v_1, v_2$ in $B$
such that $v'\neq v$ and $v_1v, v_2v'\in E(G)$.
Let $(A_2,B_2)$ be obtained from $(A,B)$ by exchanging $u_1$ and $v'$.
It is easy to see that $u_1v', u_2v', vv', v_1v', v_1u_1, v_2u_1\notin E(G)$.
So $u, u_2$ are $(a-1)$-vertices in $A_2$, $v'$ is an $a$-vertex in $A_2$, $u_1, v_2$ are $(b-1)$-vertices in $B_2$,
and $v,v_1$ are $b$-vertices in $B_2$.
By Claim 3, we have $w(A_2,B_2)=w(A,B)$.
We also see that $B_2$ is $(b-1)$-degenerate (as any $b$-good subset of $B_2$ must contain $u_1$ but $u_1$ is a $(b-1)$-vertex in $B_2$),
and $A_2$ is $(a-1)$-degenerate (because any $a$-good subset of $A_2$ must contain $v'$ and all neighbors of $v'$ in $A_2$,
but $u$, as a neighbor of $v'$, is an $(a-1)$-vertex in $A_2$, a contradiction).
Therefore, $(A_2,B_2)\in \mathscr{P}$. Then by Claim 4, $u, u_2, u_1, v_2$ form a $C_4$.
This shows that (B2) cannot occur.

By the footnote of Case 2, we have seen that (B3) cannot occur.

Suppose (B4) occurs. There is a $(b+1)$-vertex $v_2$ in $B$ such that $v_2v, v_2v_1\in E(G)$.
Let $(A_4,B_4)$ be obtained from $(A,B)$ by exchanging $u$ and $v$ and exchanging $u_1$ and $v_1$.
Since $uv_1, u_1v_1, u_2u, u_2v, u_2v_1, uv_2, u_1v_2 \notin E(G)$,
we see that $v, u_2$ are $(a-1)$-vertices in $A_4$, $v_1$ is an $a$-vertex in $A_4$, $u$ is a $b$-vertex in $B_4$,
and $u_1, v_2$ are $(b-1)$-vertices in $B_4$.
By applying Claim 3 twice, we have $w(A_4,B_4)=w(A,B)$.
We claim that $(A_4,B_4)\in \mathscr{P}$.
We first show that $A_4$ is $(a-1)$-degenerate.
Suppose not, then $A_4$ has an $a$-good subset $A'$ which contains at least one of the new vertices $v, v_1$.
Since $d_{A_4}(v)=a(v)-1$, this implies that $v_1\in A'$ and moreover all neighbors of $v_1$ in $A_4$ are in $A'$,
but this is a contradiction as $v\sim v_1$.
Similarly, one can show that $B_4$ is $(b-1)$-degenerate.
Thus $(A_4,B_4)\in \mathscr{P}$. By Claim 4, $v, u_2, u_1, v_2$ form a $C_4$.
Therefore, (B4) cannot occur.

\begin{figure}[ht!]
	\begin{minipage}{0.5\linewidth}
		\centering
		\begin{tikzpicture}
		[scale=0.8, auto=left,   roundnode/.style={circle, draw, fill=black ,inner sep=0pt, minimum width=4pt}]
		\draw[gray, thick] (-3,0) rectangle (5,2.5) ;
		\draw[gray, thick] (-3,2.5) rectangle (5,5);
		\draw[gray, thick] (1,0)--(1,5);
		\filldraw[black] (-3,4.6) circle (0pt) node[anchor=west]  { $A^*$ };
		\filldraw[black] (-3.1,2.1 ) circle (0pt) node[anchor=west]  { $A\backslash A^*$ };
		\filldraw[black] (4,4.6) circle (0pt) node[anchor=west]  { $B^*$ };
		\filldraw[black] (3.6,2.1 ) circle (0pt) node[anchor=west]  { $B\backslash B^*$ };
		\node[roundnode](u) at (0,3.7)[label=north:$u$]{};
		\node[roundnode] (u2) at (-2,1.2) [label=south:$u_2$]{};
		\node[roundnode] (u1) at (-0.5, 1.2) [label=south:$u_1$]{};
		
		\node[roundnode](v) at (2,3.2)[label=right:$v$]{};
		\node[roundnode](v') at (2.5,4.2)[label=right:$v'$]{};
		\node[roundnode] (v1) at (2.5,1.2) [label=south:$v_1$]{};
		\node[roundnode] (v2) at (4, 1.2) [label=south:$v_2$]{};
		\foreach \from/\to in {u/u1, u2/u1,v/v1,v'/v2,v1/v2,u/v,u/v',u1/v}
		\draw[black,very thick] (\from) -- (\to);
		\end{tikzpicture}
		\caption*{$(A,B)$}
\medskip
	\end{minipage}
		\begin{minipage}{0.5\linewidth}
			\centering
			\begin{tikzpicture}
			[scale=0.8, auto=left,   roundnode/.style={circle, draw, fill=black ,inner sep=0pt, minimum width=4pt}]
			\draw[gray, thick] (-3,0) rectangle (5,2.5) ;
			\draw[gray, thick] (-3,2.5) rectangle (5,5);
			\draw[gray, thick] (1,0)--(1,5);
			\filldraw[black] (-3,4.6) circle (0pt) node[anchor=west]  { $A^*_5$ };
			\filldraw[black] (-3.1,2.1 ) circle (0pt) node[anchor=west]  { $A_5\backslash A^*_5$ };
			\filldraw[black] (4,4.6) circle (0pt) node[anchor=west]  { $B^*_5$ };
			\filldraw[black] (3.4,1.2 ) circle (0pt) node[anchor=west]  { $B_5\backslash B^*_5$ };
			\node[roundnode](u2) at (-0.5,4.2)[label=north:$u_2$]{};
			\node[roundnode](v) at (-0.5,3.2)[label=left:$v$]{};
			\node[roundnode] (v1) at (-0.5, 1) [label=south:$v_1$]{};
			
			\node[roundnode] (u1) at (2,3.2) [label=right:$u_1$]{};
			\node[roundnode](u) at (2,1.7)[label=south:$u$]{};

			\node[roundnode](v') at (3,1.7)[label=north:$v'$]{};
			
			\node[roundnode] (v2) at (3, 1) [label=south:$v_2$]{};
			\foreach \from/\to in {u/u1, u2/u1,v/v1,v'/v2,v1/v2,u/v,u/v',u1/v}
			\draw[black,very thick] (\from) -- (\to);
			\end{tikzpicture}
			\caption*{$(A_5,B_5)$}
\medskip
		\end{minipage}
	\begin{minipage}{0.5\linewidth}
		\centering
		\begin{tikzpicture}
		[scale=0.8, auto=left,   roundnode/.style={circle, draw, fill=black ,inner sep=0pt, minimum width=4pt}]
		\draw[gray, thick] (-3,0) rectangle (5,2.5) ;
		\draw[gray, thick] (-3,2.5) rectangle (5,5);
		\draw[gray, thick] (1,0)--(1,5);
		\filldraw[black] (-3,4.6) circle (0pt) node[anchor=west]  {\large $A^*_5$ };
		\filldraw[black] (-3.1,2.1 ) circle (0pt) node[anchor=west]  {\large $A_5\backslash A^*_5$ };
		\filldraw[black] (4,4.6) circle (0pt) node[anchor=west]  { $B^*_5$ };
		\filldraw[black] (3.4,1.2 ) circle (0pt) node[anchor=west]  { $B_5\backslash B^*_5$ };
		\node[roundnode](v'') at (-1,4)[label=north:$v''$]{};
		\node[roundnode](v) at (0,3.2)[label=left:$v$]{};
		\node[roundnode] (v3) at (-1 ,1) [label=south:$v_3$]{};
		\node[roundnode] (v1) at (0, 1) [label=south:$v_1$]{};	
			
		\node[roundnode] (u1) at (2,3.2) [label=right:$u_1$]{};
		\node[roundnode](u) at (2,1.7)[label=south:$u$]{}; 	
		\node[roundnode](v') at (3,1.7)[label=north:$v'$]{};  	
		\node[roundnode] (v2) at (3, 1) [label=south:$v_2$]{};
		\foreach \from/\to in {u/u1,v''/u1,v''/v3,v3/v1,v/v1,v'/v2,v1/v2,u/v,u/v',u1/v}
		\draw[black,very thick] (\from) -- (\to);
		\end{tikzpicture}
		\caption*{$(A_5,B_5)$}
	\end{minipage}
	\begin{minipage}{0.5\linewidth}
		\centering
		\begin{tikzpicture}
		[scale=0.8, auto=left,   roundnode/.style={circle, draw, fill=black ,inner sep=0pt, minimum width=4pt}]
		\draw[gray, thick] (-3,0) rectangle (5,2.5) ;
		\draw[gray, thick] (-3,2.5) rectangle (5,5);
		\draw[gray, thick] (1,0)--(1,5);
		\filldraw[black] (-3,4.6) circle (0pt) node[anchor=west]  {\large $A^*_6$ };
		\filldraw[black] (-3.1,2.1 ) circle (0pt) node[anchor=west]  {\large $A_6\backslash A^*_6$ };
		\filldraw[black] (4,4.6) circle (0pt) node[anchor=west]  { $B^*_6$ };
		\filldraw[black] (3.4,1.2 ) circle (0pt) node[anchor=west]  { $B_6\backslash B^*_6$ };
		\node[roundnode] (v2) at (-1,4) [label=north:$v_2$]{};
		\node[roundnode](v) at (0,3)[label=north:$v$]{};
		
		\node[roundnode](v') at (3,4)[label=north:$v'$]{};
		
		\node[roundnode] (v1) at (-1, 1) [label=south:$v_1$]{};
		\node[roundnode] (v3) at (0 , 1) [label=south:$v_3$]{};
		
		\node[roundnode](v'') at (2,1)[label=south:$v''$]{};
		\node[roundnode] (u1) at (3,1) [label=south:$u_1$]{};
		\node[roundnode](u) at (3,2)[label=right:$u$]{};
		\foreach \from/\to in {u/u1,v''/u1,v''/v3,v3/v1,v/v1,v'/v2,v1/v2,u/v,u/v',u1/v}
		\draw[black,very thick] (\from) -- (\to);
		\end{tikzpicture}
		\caption*{$(A_6,B_6)$}
	\end{minipage}
	\caption{(A3)+(B5)}
\end{figure}

Finally we assume that (B5) occurs.
Then there exist a $(b-1)$-vertex $v'$, a $b$-vertex $v_1$,
and a $(b+1)$-vertex $v_2$ in $B$ such that $v_1\sim v_2, v_1\sim v$ and $v_2\sim v'$.
Let $(A_5,B_5)$ be obtained from $(A,B)$ by exchanging $u$ and $v$ and exchanging $u_1$ and $v_1$.
As $u_2u, u_2v, u_2v_1, v'u_1, v'v, v'v_1, v_2u, v_2u_1, v_2v \notin E(G)$,
we find that $v, u_2$ is an $(a-1)$-vertex in $A_5$, $v_1$ is an $a$-vertex in $A_5$,
$u_1$ is a $(b-1)$-vertex in $B_5$, and $u, v', v_2$ are $b$-vertices in $B_5$.
And by Claim 3,  $w(A_5,B_5)=w(A,B)$.
Similarly as before, one can show that $(A_5, B_5)\in \mathscr{P}$.

Let us observe that $u_1, u, v'$ give a configuration (B3) in $B_5$.
As $uv\in E(G)$ (where $v\in A_5^*$), by the above proof of Case 3,
(A1)-(A4) cannot occur in $A_5$
(by the symmetry between the functions $a$ and $b$, here we may view $B_5, A_5$ as the new parts $A, B$, respectively).
So the configuration (A5) must occur in $A_5$.
Following the proof of Claim 7, we show that the vertices $v, v_1$ must be involved in this configuration.
Indeed, if $A_5^\diamond$ has at least two vertices, then (A1) or (A2) occurs, a contradiction.
Thus $A^\diamond$ has exactly one vertex, that is $v_1$.
Then $v_1$ has a neighbour say $v_3$ in $A_5\backslash (A_5^*\cup \{v_1\})$
such that $d_{A_5\backslash A_5^*}(v_3)= a(v_3)$.
Since neither (A3) nor (A4) occur in $A_5$,
$v_3$ must have a neighbour, say $v''$, in $A_5^*$ which is distinct from $v$.
Note that $v_3$ is an $(a+1)$-vertex in $A_5$ (see Figure 3).

Let $(A_6, B_6)$ be obtained from $(A_5, B_5)$ by exchanging $v_2$ and $v''$.
Since there is no $C_4$ in $G$, we see that $v_2v'', v_2u, v_2u_1, v_2v, v''u, v''v, v''v',v''v_1\notin E(G)$.
So $v'$ is a $(b-1)$-vertex in $B_6$, $u,u_1,v''$ are $b$-vertices in $B_6$,
$v, v_2$ are $(a-1)$-vertices in $A_6$, and $v_1$ is an $(a+1)$-vertex in $A_6$.
Clearly $A_6$ is $(a-1)$-degenerate.
If $B_6$ contains a $b$-good subset $S$, then $v''\in S$ and thus $u_1, u, v'\in S$,
contradicting that $v'$ is a $(b-1)$-vertex in $B_6$.
So $B_6$ is $(b-1)$-degenerate.
This, together with $w(A_6, B_6)=w(A_5, B_5)$ (by Claim 3), shows that $(A_6, B_6)\in \mathscr{P}$.
Then by Claim 4, $v'v_2\in E(G)$ and thus $v',v,u_1,u$ form a $C_4$.
This contradiction completes the proof of Theorem \ref{Thm: main}.
\qed

\bibliographystyle{unsrt}

\end{document}